\numberwithin{equation}{section}
\newtheorem{Th}{Theorem}[section]
\newtheorem{Rem}[Th]{Remark}
\newtheorem{Cor}[Th]{Corollary}
\renewcommand{\section}%
   {\setcounter{equation}{0}\@startsection {section}{1}{\z@}{-3.5ex plus -1ex
  minus -.2ex}{2.3ex plus .2ex}{\Large\bf}}
\def\ds{\displaystyle}
\def\R{\mathbb R}
\def\T{\mathbb T}
\newcommand{\beqsn}{\arraycolsep1.5pt\begin{eqnarray*}}
\newcommand{\eeqsn}{\end{eqnarray*}\arraycolsep5pt}
\newcommand{\beqs}{\arraycolsep1.5pt\begin{eqnarray}}
\newcommand{\eeqs}{\end{eqnarray}\arraycolsep5pt}
\title{Notes on a paper of Pokhozhaev}
\author[Boiti]{Chiara Boiti}
\address{
Dipartimento di Matematica e Informatica \\Universit\`a di Ferrara\\
Via Ma\-chia\-vel\-li n.~30\\
I-44121 Ferrara\\
Italy}
\email{chiara.boiti@unife.it}
\author[Manfrin]{Renato Manfrin}
\address{Dipartimento di Culture del Progetto\\
Universit\`a IUAV di Venezia\\
Dorsoduro 2196, Cotonificio Veneziano\\
30123 Venezia\\
Italy}
\email{manfrin@iuav.it}
\begin{document}

\keywords{Kirchhoff equation, Pokhozhaev second order conservation law, lifespan of solutions}
\subjclass[2020]{35L65, 35L72, 35L15, 35L20}

\begin{abstract}
We prove  a second order  identity for the  Kirchhoff equation
which yields, in particular, a simple and direct proof
of Pokhozhaev's second order conservation law when the nonlinearity has the special form
$(C_1 s +C_2)^{-2}$. As applications, we give: an estimate of order $\varepsilon^{-4}$ for the lifespan $T_\varepsilon$ of the solution of the Cauchy problem with initial data of size $\varepsilon$ in Sobolev spaces when the nonlinearity is given by any $C^2$ function $m(s)>0$; a  necessary and sufficient condition for boundedness of a second order energy of the solutions.
\end{abstract}

\maketitle

\markboth{\sc  Notes on a paper of Pokhozhaev}
 {\sc C.~Boiti, R.~Manfrin}

\section{Introduction}
 As it was proved by
S.\,I.~Pokhozhaev  (\cite{P2}, \cite{P}),  if the nonlinearity has the  form
\beqsn
a(s)= \frac {1}{(C_1 s + C_2)^2} \quad\; \text{($C_1, C_2 \in \R$ not both zero)}
\eeqsn
and if $u$ is a sufficiently regular  solution in $\Omega\times [0,T)$ ($\Omega \subset \R^n$ a bounded, $C^2$ domain; $T > 0$) of
 the Kirchhoff  equation
\beqs
\label{1}
u_{tt}-a \left (\int_\Omega|\nabla u|^2dx \right )\Delta u=0, \quad u=0 \;\, \text{on}\;\, \partial \Omega \times [0,T),
\eeqs
then the second order functional
\beqs \label{PoII}
{  I}u(t)  :=   (C_1 s +C_2  )\int_\Omega |\nabla u_t|^2\, dx + \frac {1}
 {C_1 s +C_2 } \int_\Omega |\Delta u |^2\, dx - C_1 
\left (\int_\Omega \nabla u \!\cdot\!  \nabla u_t  \,dx\right )^2,
\eeqs
with
\beqsn
s = s(t)=
 \int_\Omega |\nabla u(x,t)|^2 \,dx,
 \eeqsn
remains constant in $[0,T)$.\footnote{\,It is clearly understood that we are assuming 
$ C_1 s(t)+ C_2 \ne 0$ in $[0,T)$.}
See \cite[formula (2.3)]{P}.

This is the so-called Pokhozhaev's second order conservation law, associated
 with  \eqref{1}, whose proof was only sketched in \cite{P}, referring to
\cite{P2} (in Russian) for more details. See also \cite{MP} for a simpler explanation and \cite{PN} for some applications to
the global existence of low regularity solutions of \eqref{1}. We show here that this conservation law is a consequence of a second order identity, formula \eqref{4} below, which holds for every $C^2$ nonlinearity
 $m$, as long as
$m(s) > 0$. Despite its elementary nature, this identity does not seem
to be well known.
We also give two applications: (1) an estimate of order $\varepsilon^{-4}$ for the life span
$T_\varepsilon$ of the solution of the Cauchy problem with initial data of size $\varepsilon$ in
Sobolev spaces, for every $C^2$ function $m(s)>0$, extending the result of
\cite{BH} where $m(s)=1+s$ and $\Omega=\T^n$.
See also \cite{BH2} where an estimate of order $\varepsilon^{-6}$ for $T_\varepsilon$
is given, but assuming in addition a rather strong condition on the initial data;
(2) a necessary and sufficient condition for boundedness (or possible
blow-up) of the  second order energy $E(t)$; see \eqref{5} and  Theorem~\ref{th5} below.

\section{A second order identity for general nonlinearity}
For simplicity from now on we will assume  $\Omega = \R^n$. If $\Omega  \subset\R^n$ is a bounded, $C^2$ domain or if $\Omega$ is the n-dimensional torus $\T^n $, our arguments still work by adopting the usual boundary conditions.

So let us suppose $\Omega = \R^n$ and
\beqs
\label{rg}
u\in C^k ([0,T) ; H^{2-k}(\R^n)), \quad k=0,1,2,
\eeqs
 a given solution in $\R^n \times [0,T)$, $ T > 0$, of  the equation
\beqs
\label{1bis}
u_{tt}-m \left (\int|\nabla u|^2dx \right )\Delta u=0,
\eeqs
with $m:\,J  \rightarrow \R$ ($J \subset \R$  open interval)  such that
\beqs
\label{pos}
m \in C^2 (J), \quad m(s) >0 \quad \text{for}\   s\in J, \quad
 \int |\nabla u |^2 dx  \in J \quad\text{for}\  t \in [0,T).
\eeqs
Taking into account \eqref{rg} and \eqref{pos}, we may define the {\em  energy}
\beqs
\label{5}
E(t):=\frac{1}{\sqrt{m}}\int |\nabla u_t|^2dx+\sqrt{m}\int |\Delta u|^2dx,
\eeqs
with $m=m(s)$ and
\beqsn
s=s(t):= \int |\nabla u |^2 dx \quad \text{for} \quad t\in [0,T).
\eeqsn

\subsection{Second order  identity} Under the previous assumptions, we can establish the following:
\beqs
\label{4}
\frac{d}{dt}\left[E(t)- \frac{1}{4}  \frac {d}{ds} \left ( \frac {1}{\sqrt {m}} \right)  s'(t)^2\right]= -
\frac{1}{4} \frac {d^2}{ds^2} \left ( \frac {1}{\sqrt {m}} \right)   s'(t)^3,
\eeqs
where
\beqsn
\frac {d^k}{ds^k} \left ( \frac {1}{\sqrt {m}} \right) =
 \frac {d^k}{ds^k} \left ( \frac {1}{\sqrt {m}} \right) (s) \quad \text{for}\;\,  \quad k =1,2.
 \eeqsn
\begin{proof} From the assumption \eqref{rg} it easily follows that $s$ is a $C^2$
 function in $[0,T)$.
In particular,
\beqs \label{es1}
s'(t)=2\int \nabla u \!\cdot \! \nabla u_t  \, dx
\eeqs
and, by \eqref{1bis},
\beqs 
\label{es2}
s''(t) =
2\int|\nabla u_t|^2\, dx-2\,m  \int |\Delta u|^2\, dx.
\eeqs
Now, we merely need to derive  $E(t)$. The following  calculation makes sense for a sufficiently regular solution,
but it can be justified by  density arguments (see, for instance,  \cite [p.\,23]{Y}). We find:
\beqs
\nonumber
\ \ E'(t)=&&\left(\frac{1}{\sqrt{m}}\right)'\int |\nabla u_t|^2dx  +   \big (\sqrt{m} \big )' \int |\Delta u|^2dx+ \frac{2}{\sqrt{m}} \int \big [
\nabla u_{t}\nabla u_{tt}
 +  {m} \Delta u\Delta u_t \big  ]dx
 \\
 \nonumber
=&&\frac{d}{ds}\left ( \frac{1}{\sqrt{m}} \right ) s'(t)\int |\nabla u_t|^2dx +
 \frac{d}{ds} \big ({\sqrt{m}} \big )  s'(t)\int |\Delta u|^2 dx  -
\frac{2}{\sqrt{m}}\int \big  [ u_{tt}-m \Delta u \big ]\Delta u_t  dx\\
\label{stella}
=&& \frac{d}{ds} \left ( \frac{1}{\sqrt{m}} \right ) s'(t)  \left[  \int |\nabla u_t|^2dx-m \int
|\Delta u|^2dx  \right],
\eeqs
which immediately says that  $E(t) $ is a $C^1$ function in $[0,T)$. Taking into account \eqref{es2}, we have
\beqsn
E'(t)= \frac{d}{ds}  \left ( \frac{1}{\sqrt{m}} \right ) s'(t) \frac {s''(t)}{2} =
\frac{1}{4}  \frac{d}{ds} \left ( \frac{1}{\sqrt{m}} \right ) \frac {d}{dt} s'(t)^2,
\eeqsn
 and then the identity  \eqref{4}.
 \end{proof}
 
\section {A simple proof of Pokhozhaev's second order conservation law}
We first note that if $m\in C^2$ and $ m (s)> 0$ in the open interval $J\subset \R$, then
\beqsn
\frac {d^2}{ds^2} \left ( \frac {1}{\sqrt {m}} \right) \equiv 0 \; \; \text{in} \; \, J  \quad \Leftrightarrow \quad
\frac {1}{ \sqrt{m}} = C_1 s + C_2 \; \; \text{in} \; \, J ,
 \eeqsn
with $C_1, C_2\in \R$ such that $C_1s + C_2 > 0$ for $s\in J$.  This means, in particular, that
  \eqref{4} gives
\beqs
\label{4bis}
E(t)- \frac{1}{4} \frac {d}{ds} \left ( \frac {1}{\sqrt {m}} \right)  s'(t)^2 = \text{const.}
 \quad \quad \forall t \in [0,T),
\eeqs
if  $m$ has the special  form $m(s)=(C_1s+C_2)^{-2}$.
\\
Then, writing explicitly \eqref{4bis} for this particular nonlinearity,
we immediately obtain {\em Pokhozhaev's
second  order conservation law}, i.e.,
\beqsn
Iu(t)  =   \text{const.}   \quad \quad \forall t \in [0,T),
\eeqsn
where $ Iu $ is the second order functional \eqref{PoII}, with $\Omega = \R^n$.

\section{An estimate of the life-span for small initial data}
We consider now the Cauchy problem  with initial data of size $\varepsilon$. That is,
\beqs
\label{CP}
\begin{cases}
\ds u_{tt}-m \left(\int |\nabla u|^2dx\right)\Delta u=0,&(x,t)\in \R^n \times\R^+_t\cr
\ds u(x,0)=\varepsilon u_0(x), \quad
\ds u_t(x,0)=\varepsilon   u_1(x)
\end{cases}
\eeqs
for  fixed  $u_0,u_1$ (not both zero) and $\varepsilon > 0$ small enough
(here $\R^+_t=\{t\geq0\}$).

\begin{Th}
\label{th}
Let $m\in C^2(\R^+)$, $m(s) >0$ for all $s \in \R^+$, and let us suppose
\beqsn
u_0 \in H^2(\R^n) , \quad u_1\in H^1(\R^n).
\eeqsn

There exist then constants $C,\delta > 0$  such that
the Cauchy problem \eqref{CP} has a unique solution $
u\in C^k([0,T_\varepsilon );H^{2-k}(\R^n))$ $(k=0,1,2)$ with
\beqsn
T_\varepsilon >  \frac {C}{\varepsilon^{4}}, \quad \text{if} \quad 0 < \varepsilon \leq \delta  .
\eeqsn
\end{Th}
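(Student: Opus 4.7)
The plan is to combine local well-posedness for the Cauchy problem \eqref{CP} (standard for the Kirchhoff equation with $m \in C^2$, $m > 0$, e.g.\ by Bernstein/Arosio--Spagnolo type arguments) with a bootstrap/continuation argument that uses identity \eqref{4} to control $E(t)$. The continuation criterion is that the solution extends as long as $s(t)$ stays in a compact subset of $J = \R^+$ where $m > 0$ and $E(t)$ stays bounded; so it is enough to prove a priori bounds $s(t) = O(\varepsilon^2)$ and $E(t) = O(\varepsilon^2)$ on a time interval of length $C\varepsilon^{-4}$.

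First I would collect two elementary a priori estimates. The standard first-order energy conservation $\int u_t^2\,dx + M(s(t)) = \text{const}$ (with $M' = m$) gives $s(t) \leq C\varepsilon^2$ for all $t$ in the lifespan, since the initial data are of size $\varepsilon$. Next, by Cauchy--Schwarz in \eqref{es1},
\[
|s'(t)| \leq 2\sqrt{s(t)}\,\|\nabla u_t\|_{L^2}, \qquad \|\nabla u_t\|_{L^2}^2 \leq \sqrt{m(s(t))}\,E(t),
\]
so $|s'(t)|^2 \leq C\, s(t)\, E(t)$. The crucial gain here is that $|s'|$ picks up an extra power of $\varepsilon$ from $\sqrt{s} = O(\varepsilon)$ beyond the naive bound.

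Then I would set $F(t) := E(t) - \tfrac{1}{4}\,\tfrac{d}{ds}(1/\sqrt{m})\,s'(t)^2$ and bootstrap on $E$. Assume $E(t) \leq 2E(0)$ on $[0,T^*)$. Since $E(0) = O(\varepsilon^2)$ and $s(t) = O(\varepsilon^2)$, the derivatives $(1/\sqrt{m})'(s(t))$, $(1/\sqrt{m})''(s(t))$ are $O(1)$ on this range. The inequality above gives $|s'(t)|^2 = O(\varepsilon^4)$ and hence $|s'(t)|^3 = O(\varepsilon^6)$. Identity \eqref{4} then yields $|F'(t)| \leq C\varepsilon^6$, so
\[
F(t) \leq F(0) + C\varepsilon^6\, t.
\]
Since $|F(t) - E(t)| = O(\varepsilon^4)$, this gives $E(t) \leq E(0) + C'\varepsilon^4 + C\varepsilon^6 t$. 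Taking $T^* = C_0\varepsilon^{-4}$ with $C_0$ small and $\varepsilon \leq \delta$ small enough, the right-hand side stays below $2E(0)$, closing the bootstrap. Combined with the uniform bound $s(t) = O(\varepsilon^2)$, the continuation criterion yields $T_\varepsilon > C_0\varepsilon^{-4}$.

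The main subtlety — rather than a genuine obstacle, given \eqref{4} — is the use of Cauchy--Schwarz to extract the extra factor of $\sqrt{s} = O(\varepsilon)$ in $s'$; without it, the cubic right-hand side of \eqref{4} would only give $O(\varepsilon^3)$ and hence a lifespan of order $\varepsilon^{-2}$. One must also verify that the constants depending on $m$, $1/\sqrt m$ and their first two derivatives, evaluated on $s \in [0, C\varepsilon^2]$, can be absorbed into fixed constants for $\varepsilon \leq \delta$, which is immediate from continuity of $m$ at $s = 0$ together with $m(0) > 0$.
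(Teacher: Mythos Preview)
Your argument is correct and follows essentially the same route as the paper: first-order conservation gives $s(t)=O(\varepsilon^2)$, Cauchy--Schwarz extracts the extra factor of $\varepsilon$ in $s'$, and identity \eqref{4} controls $F(t):=E(t)-\tfrac14(1/\sqrt m)'\,s'(t)^2$. The only cosmetic difference is in the closure: you run a bootstrap $E\le 2E(0)$ and get the linear bound $|F'|=O(\varepsilon^6)$, while the paper instead writes $|F'|\le C\varepsilon^3 F^{3/2}$ and integrates this nonlinear ODE explicitly; both yield the same $\varepsilon^{-4}$ lifespan.
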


\begin{proof}
Let us first recall the well known  {\em first order conservation law}
\beqs
\label{62}
\int |u_t|^2dx+M\left(\int |\nabla u|^2dx\right)
=\varepsilon^2\int |u_1|^2dx+M\left(\varepsilon^2\int |\nabla u_0|^2dx\right),
\eeqs
independently of $t\ge 0$, and where $M:  \R^+ \rightarrow \R^+ $ is the continuous, strictly increasing function
\beqs
\label{EM}
M(s)=\int_0^sm(h) dh .
\eeqs

It is well known that \eqref{62},
under the assumption that $m$ is coercive at $\infty$
(i.e. $\int_0^\infty m(s) ds = + \infty$), implies that the Cauchy problem \eqref{CP}, with initial data
$u(0,x)\in H^2(\R^n)$ and $u_t(0,x)\in H^1(\R^n)$, 
has a unique local solution $u\in C^k([0,T);H^{2-k}(\R^n))$ $(k=0,1,2)$ for some $T > 0$.
See the general result of \cite[Theorem~2.1]{AP} and the bibliography therein.
 See also \cite{Y}.\footnote{\,Theorem~I of \cite{Y} proves that the Cauchy problem for the dissipative Kirchhoff equation is locally well posed. But the proof
can be easily adapted to the non-dissipative case \eqref{CP}.}

Coercivity at $\infty$ is used to prove the boundedness of $\int|\nabla u|^2dx$.
Here the assumption of small initial data allows us to merely assume $m(s)>0$ because, in this case, the
first order conservation law \eqref{62} is enough to obtain that $ \int|\nabla u|^2dx$ remains small.
This, in turn, implies that $m(\int|\nabla u|^2dx)$ remains greater than a positive constant (see \eqref{bound0}-\eqref{summ} below).
Let us also remark that \cite{AP} would give an estimate of the lifespan $T_\varepsilon$ of the solution of order $\varepsilon^{-2}$, while here the second order identity \eqref{4} allows
us to easily get an estimate of $T_\varepsilon$ of order $\varepsilon^{-4}$.

To prove it let us set
\beqsn
\quad N({\varepsilon}) := \varepsilon^2 \int |u_1|^2dx+M\left(\varepsilon^2 \int |\nabla u_0|^2dx\right).
\eeqsn
From \eqref{62} it is clear that $ \int |u_t|^2dx $ is bounded above by $N(\varepsilon)$.
To obtain an {\it a-priori} bound for $\int |\nabla u |^2dx $
let us first remark that $\lim_{\varepsilon\to0^+}N(\varepsilon)=0$ and hence there exists
$\varepsilon_0>0$ such that
\beqsn
N(\varepsilon)\leq N(\varepsilon_0)<\int_0^{+\infty}\!\!m(h)dh,
\qquad\forall\, 0\leq\varepsilon\leq\varepsilon_0.
\eeqsn
This implies that $M^{-1}(N(\varepsilon))$ is well defined for all $0\leq \varepsilon\leq \varepsilon_0$ and
\beqs
\label{bound0}
s(t)=\int |\nabla u|^2dx \le M^{-1}(N(\varepsilon) ) \le M^{-1}(N(\varepsilon_0) ) < \infty,
\qquad\forall t\geq0,\ 0\leq\varepsilon\leq\varepsilon_0.
\eeqs
Since
 \beqs
 \label{49}
 0< m_0 \le  m(s)  \le  m_1  \quad \text{if} \quad 0 \le s \le M^{-1}(N(\varepsilon_0))
 \eeqs
for suitable  constants $m_0,m_1$, it easily follows from
\eqref{62} that there exists $c_0>0$ such that
\beqs
\label{summ}
s(t)=\int |\nabla u|^2dx  \le   c_0 \varepsilon^2,
\qquad\forall t\geq0,\ 0\leq\varepsilon\leq\varepsilon_0.
\eeqs

From \eqref{es1}, H\"older's inequality, \eqref{summ} and \eqref{5}, we thus obtain, for  $0\leq\varepsilon\leq \varepsilon_0$:
\beqs \label{11}
\begin{aligned}
s'(t)^2 &=  4\left(\int\nabla u\cdot\nabla u_tdx\right)^2
\leq 4\int|\nabla u|^2dx\cdot\int|\nabla u_t|^2dx\\
&\leq 4  c_0  \varepsilon^2 \int |\nabla u_t|^2dx
\leq 4  c_0  \varepsilon^2  \sqrt{m}  E(t)
\leq c_1\varepsilon^2E(t),
\end{aligned}
\eeqs
with $c_1:= 4 c_0 \sqrt{m_1}$.

Therefore, for $c_2:=\max_{{} \, 0 \le s \le c_0 \varepsilon_0^2} |\frac 14 \frac{d}{ds}  ( \frac{1}{\sqrt {m}}) |$ and
\beqs
\label{12}
F(t):=E(t)- \frac 14 \frac{d}{ds}  \left( \frac{1}{\sqrt {m}}\right )  s'(t)^2,
\eeqs
we have that
\beqsn
E(t)-c_2 c_1\varepsilon^2E(t)\leq F(t)\leq E(t)+c_2 c_1\varepsilon^2E(t)
\eeqsn
and hence
\beqs
\label{9}
\frac{E(t)}{2}  \leq  F(t)\leq   \frac{3 E(t)}{2}
\eeqs
provided $0 \le \varepsilon \le \delta$, with $\delta = \min \big\{ \varepsilon_0 , 
1/ \sqrt{2 c_2 c_1}\big \} $.
Similarly, from \eqref{11} and \eqref{9} we get
\beqs
\label{cor}
\left|
\frac 14 \frac{d^2}{ds^2}  \left( \frac{1}{\sqrt {m}}\right ) s'(t)^3
\right | \leq 2   c_4  \varepsilon^3F(t)^{3/2} \quad  (0\leq\varepsilon\leq\delta)
\eeqs
for a suitable constant $c_4 > 0$.
From \eqref{4} we thus obtain the differential inequality
\beqs
\label{diffin}
F'(t)\leq  2 c_4  \varepsilon^3F(t)^{3/2}.
\eeqs
Since for $0\le \varepsilon \le \delta$ we have also  $F(0) \le  c_5  \varepsilon^2$, with
$c_5 > 0$, integrating \eqref{diffin}  we finally have
\beqsn
\sqrt{F(t)} \leq  \frac{\sqrt{F(0)}}{1-c_4  \varepsilon^3 \sqrt{F(0)}\,t }
 \leq \frac{\sqrt{c_5}\,\varepsilon}{1-c_4 \sqrt{c_5} \varepsilon^4 t} \quad \text{for}
\quad 0\le t < \frac{1} {c_4  \sqrt{c_5}\,\varepsilon^4} .
\eeqsn
This means that,  for $0 < \varepsilon \le \delta$,
\beqsn
F(t) \le 4 c_5  \varepsilon^2 \quad \text{if} \quad 0\le  t  \le  \frac{1} {2 c_4  \sqrt{c_5}\varepsilon^4}
=: 
\frac C{\varepsilon^4}.
\eeqsn
By \cite{AP} problem \eqref{CP} has a unique solution on
$\R^n \times[0,T_\varepsilon)$, with $T_\varepsilon > C /\varepsilon^4$.
\end{proof}

By the same computations we also have:
\begin{Cor} If we further assume that, for some $\alpha> 0$,
\beqs
\label{420}
\frac {d^2}{ds^2} \left ( \frac 1{\sqrt{m}} \right ) = O(s^\alpha) \quad {as} \quad s \rightarrow 0^+,
\eeqs
then we obtain the estimate: $T_\varepsilon > C/ \varepsilon^{4+2\alpha}$ when  $0 <  \varepsilon \le \delta $.
\end{Cor}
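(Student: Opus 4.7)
The plan is to follow the proof of Theorem \ref{th} almost verbatim, improving a single estimate---the bound for the right-hand side of the identity \eqref{4}---by exploiting the extra hypothesis \eqref{420}.

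First I would keep the setup unchanged. The first-order conservation law \eqref{62} together with $m>0$ still yields the a priori bound $s(t)\le c_0\varepsilon^2$ of \eqref{bound0}--\eqref{summ} for every $t\ge 0$ and $0\le \varepsilon\le \varepsilon_0$. Likewise inequality \eqref{11}, the local bound $c_2=\max_{0\le s\le c_0\varepsilon_0^2}|\frac14\frac{d}{ds}(1/\sqrt{m})|$, and the equivalence \eqref{9} between $E(t)$ and $F(t)$ remain valid for $0\le \varepsilon\le\delta$ after possibly shrinking $\delta$. None of these pieces depend on the size of $\frac{d^2}{ds^2}(1/\sqrt{m})$.

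The crucial change is in \eqref{cor}. Assumption \eqref{420} provides constants $s_0,K>0$ such that $|\frac{d^2}{ds^2}(1/\sqrt{m})(s)|\le K s^{\alpha}$ on $[0,s_0]$. Shrinking $\varepsilon_0$ so that $c_0\varepsilon_0^2\le s_0$, the bound $s(t)\le c_0\varepsilon^2$ forces $|\frac{d^2}{ds^2}(1/\sqrt{m})(s(t))|\le K(c_0\varepsilon^2)^{\alpha}$ uniformly in $t$. Combining this with $s'(t)^2\le c_1\varepsilon^2 E(t)\le 2c_1\varepsilon^2 F(t)$, which comes from \eqref{11} and \eqref{9}, the estimate \eqref{cor} is replaced by
\[
\left|\tfrac14\tfrac{d^2}{ds^2}(1/\sqrt{m})\,s'(t)^3\right|\le 2c_4'\,\varepsilon^{3+2\alpha}F(t)^{3/2},
\]
for a new constant $c_4'>0$ absorbing $K$, $c_0^\alpha$ and $c_1^{3/2}$.

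Plugged into \eqref{4}, this produces the sharpened differential inequality $F'(t)\le 2c_4'\,\varepsilon^{3+2\alpha}F(t)^{3/2}$. Integrating it exactly as in the proof of Theorem \ref{th}, together with the unchanged initial bound $F(0)\le c_5\varepsilon^2$, shows that $F(t)$ stays bounded by $4c_5\varepsilon^2$ on an interval of length $C/\varepsilon^{4+2\alpha}$, and by \cite{AP} this yields $T_\varepsilon>C/\varepsilon^{4+2\alpha}$. I foresee no genuine obstacle; the only mildly delicate point is ensuring that the constant in the $O$-estimate \eqref{420} is uniform, which is achieved by shrinking $\varepsilon_0$ so that the range $[0,c_0\varepsilon_0^2]$ of $s(t)$ lies inside the neighborhood of $0$ where the $O$-bound is valid.
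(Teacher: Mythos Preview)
Your proposal is correct and follows exactly the approach of the paper: the paper's proof consists of the single observation that \eqref{420} together with \eqref{summ} upgrades \eqref{cor} to have $\varepsilon^{3+2\alpha}$ in place of $\varepsilon^3$, after which the rest of the argument of Theorem~\ref{th} applies verbatim. You have simply spelled out the details (shrinking $\varepsilon_0$ so that $c_0\varepsilon_0^2$ lies in the region where the $O$-bound holds, and tracking the new constant), which the paper leaves implicit.
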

\begin{proof}
In fact \eqref{420}, together with \eqref{summ}, implies \eqref{cor}
with $\varepsilon^{3+ 2\alpha}$ instead of $\varepsilon^3$.
\end{proof}

\section{A condition for boundedness of the energy $E(t)$}

Sufficient conditions for boundedness of the energy $E(t)$ are already known.
Assuming, for instance, that $m$ is a $C^1$ function with $m(s)>0$ and coercive at
$\infty$, we have that 
\beqs
\label{b0}
s(t) \le
M^{-1} \left [ \int |u_t(x,0) |^2  dx +  M \left (\int |\nabla u(x,0) |^2  dx \right ) \right ] := \bar s,
\eeqs
and hence the condition that
$
V(t):= \int_0^t |s'(\tau)|d\tau
$
 is bounded for $t\in[0,T)$ is sufficient for boundedness of $E(t)$ because \eqref{stella}
implies
\beqs
\label{fiore}
E'(t) \le \sqrt{m} \left | \frac d{ds} \left (\frac 1{\sqrt{m}}\right )\right |  |s'(t) |   E(t),
\eeqs
which yields
\beqs  
\label{CS}
E(t)  \le  E(0) e^{c V(t)} \quad \text{for} \quad t \in [0,T)
\eeqs
for $c:=\max_{0\leq s\leq\bar s}|\sqrt{m}\frac{d}{ds}(\frac{1}{\sqrt{m}})|$.

Here, applying the second order identity \eqref{4}, we find a  {\it necessary} and {\it sufficient} condition for the boundedness of
 $E(t)$. More precisely, let us consider the Cauchy problem
\beqs
\label{CP2}
\begin{cases}
\ds u_{tt}-m \left(\int |\nabla u|^2dx\right)\Delta u=0,&(x,t)\in \R^n \times\R^+_t\cr
\ds u(x,0)=\phi_0(x), \quad
\ds u_t(x,0)=   \phi_1(x) \phantom{\int}
\end{cases}
\eeqs
with
$m\in C^2(\R^+)$, $m(s) >0 $, $m$ coercive at $\infty$, and  initial data $\phi_0\in H^2(\R^n),\phi_1 \in H^1(\R^n)$.

If $u\in C^k([0,T);H^{2-k}(\R^n))$ $(k=0,1,2)$ is the local solution of the Cauchy
problem \eqref{CP2} in $\R^n \times [0,T)$, $T > 0$, then:
\begin{Th} \label{th5}
The energy $E(t)$ of the solution $u $ is bounded in $[0,T)$ if and only if
\beqsn
S(t):=  \int_0^t \frac{d^2}{ds^2} \left ( \frac 1{\sqrt{m}}\right ) s'(\tau)^3 d\tau
\eeqsn
is bounded in  $[0,T)$.
\end{Th}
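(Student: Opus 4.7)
The plan is to integrate the second order identity \eqref{4} on $[0,t]$. Setting
\[
G(t) := E(t) - \frac{1}{4}\frac{d}{ds}\left(\frac{1}{\sqrt m}\right)\!(s(t))\, s'(t)^2,
\]
the identity gives $G(t) = G(0) - \frac{1}{4}S(t)$ for every $t \in [0,T)$, so $G(t)$ is bounded on $[0,T)$ if and only if $S(t)$ is bounded. The theorem is therefore reduced to the equivalence $E$ bounded $\Leftrightarrow$ $G$ bounded, i.e., to controlling the difference $E(t) - G(t) = \frac{1}{4}\frac{d}{ds}(\frac{1}{\sqrt m})(s(t))\, s'(t)^2$.

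For this I use the first order conservation law \eqref{62} together with the coercivity of $m$ at $\infty$ to bound $s(t) \le \bar s$ on $[0,T)$, as in \eqref{b0}. Because $m \in C^2(\R^+)$ and $m>0$, on the compact interval $[0,\bar s]$ all the relevant quantities ($m$, $m'$, $\frac{d}{ds}(\frac{1}{\sqrt m})$) are uniformly bounded, with $m$ bounded below by some $m_0>0$. Hölder's inequality used exactly as in \eqref{es1}--\eqref{11} gives $s'(t)^2 \le 4 s(t) \int |\nabla u_t|^2\,dx \le 4 \bar s \sqrt{m_1}\, E(t)$, and therefore the pointwise comparison
\[
|E(t) - G(t)| = \frac{1}{4}\left|\frac{d}{ds}\left(\frac{1}{\sqrt m}\right)\right|\, s'(t)^2 \le K\, E(t)
\]
for an explicit constant $K>0$ depending only on $m$ and $\bar s$. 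The direction ``$E$ bounded $\Rightarrow$ $S$ bounded'' is then immediate: if $E \le E^{*}$, then $|E - G| \le K E^{*}$, so $G = E - (E - G)$ is bounded, and hence so is $S = 4(G(0) - G)$.

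For the converse direction one must extract the boundedness of $E$ from $|E - G| \le K E$ plus the known boundedness of $G$. Since
\[
E(t) - G(t) = -\frac{m'(s(t))}{8\, m(s(t))^{3/2}}\, s'(t)^2
\]
has the sign of $-m'(s(t))$, when $m' \ge 0$ on $[0,\bar s]$ one has $E \le G$ directly and the implication follows; in the opposite case the pointwise bound yields $(1-K)E \le G$, giving $E \le G/(1-K)$ provided the structural constant $K$ can be kept below $1$. This absorption is the main obstacle, since unlike the small-data setting of Theorem~\ref{th} no parameter $\varepsilon$ is available to shrink $K$. I would handle it by sharpening the Hölder estimate, combining $s'(t)^2 \le 4 s(t)\int|\nabla u_t|^2\,dx$ with the alternative bound $s'(t)^2 \le 4 \bigl(\int|u_t|^2\,dx\bigr)\int|\Delta u|^2\,dx$ coming from $s'(t) = -2\int u_t \Delta u\,dx$ and the first order control on $\int|u_t|^2\,dx$, so as to absorb against the two components $\frac{1}{\sqrt m}\int|\nabla u_t|^2\,dx$ and $\sqrt m \int|\Delta u|^2\,dx$ of $E(t)$ separately.
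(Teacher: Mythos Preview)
Your setup and the forward direction (``$E$ bounded $\Rightarrow$ $S$ bounded'') are correct and coincide with the paper's argument. The gap is in the converse direction. The pointwise absorption $(1-K)E\le G$ cannot be rescued by the refinement you propose: from $s'=-2\int u_t\,\Delta u$ and the first order conservation law you indeed get $|E-G|\le A\,E_1$ and $|E-G|\le B\,E_2$ with $E=E_1+E_2$, $A=\sup_{[0,\bar s]}\frac{|m'|s}{2m}$, $B=\sup\frac{|m'|\|u_t\|^2}{2m^2}$, but combining these (say via the geometric mean) only yields $|E-G|\le \tfrac{1}{2}\sqrt{AB}\,E$, and nothing prevents $A$ and $B$ from being large for general $m$ and general initial data. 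There is simply no structural smallness available, so a purely pointwise comparison of $E$ and $G$ cannot close the argument.

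The paper avoids this obstacle altogether by exploiting two additional facts you did not use. First, at any instant $t_0$ with $s'(t_0)=0$ the correction term vanishes, so $E(t_0)=G(t_0)$ \emph{exactly}; this gives a bound on $E$ at all zeros of $s'$ directly from the boundedness of $S$. Second, between two consecutive zeros of $s'$ (or between $t=0$ and the first zero) the function $s$ is monotone, hence $\int_{t_0}^{\bar t}|s'(\tau)|\,d\tau=|s(\bar t)-s(t_0)|\le \bar s$; plugging this into the Gronwall estimate $E'(t)\le c\,|s'(t)|\,E(t)$ coming from \eqref{stella} gives $E(\bar t)\le E(t_0)\,e^{c\bar s}$. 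The combination yields a bound on $E(\bar t)$ depending only on $G(0)$, $\sup_{[0,\bar t]}|S|$ and $\bar s$, with no absorption constant to worry about. This splitting into ``$s'=0$'' points (where $E=G$) and monotone stretches (where Gronwall applies with bounded total variation) is the missing idea in your proposal.
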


\begin{proof} 
 From \eqref{b0} we have, for $0 \le s \le \bar s$,
\beqsn
0 < \Lambda_0 \le m (s) \le \Lambda_1 \quad \text{and} \quad \left |\frac {dm}{ds}(s) \right | \le \Lambda_2,
\eeqsn
 for suitable constants $\Lambda_0$,  $\Lambda_1$, $ \Lambda_2$.
 Besides, from
the  second  order identity \eqref{4} we get
\beqs
\label{4biss}
\left [E - \frac{1}{4}  \frac {d}{ds}  \left ( \frac {1}{\sqrt {m}} \right)  {s'}^2\right]_0^t= -
\frac{1}{4} \int_0^t  \frac {d^2}{ds^2}  \left ( \frac {1}{\sqrt {m}} \right)   {s'(\tau) }^3 d\tau.
\eeqs

Now, let  us assume $E(t)$ bounded in $[0,T)$. Then, arguing as in \eqref{11},
 \beqs
 \label{2stelle}
 s'(t) ^2  \le   4  \bar s \sqrt{\Lambda_1}  E(t) \quad\text{and}\quad
 \left |\frac {d}{ds}  \left ( \frac {1}{\sqrt {m}} \right) (s(t)) \right | \le \frac {\Lambda_2}{2\Lambda_0^{3/2}}
 \quad \text{for}\quad t\in [0,T),
 \eeqs
 and hence from \eqref{4biss} we immediately deduce that  $S(t)$ remains bounded in $[0,T)$.

Conversely, let us suppose $S(t)$ bounded in $ [0,T)$ and let $ \bar t\in (0,T)$. If $ s'( \bar t  ) =0$, it is enough to observe that from
\eqref{4biss} we have:
\beqsn
E(\bar t ) = E(0) - \frac{1}{4}  \left [\frac {d}{ds}  \left ( \frac {1}{\sqrt {m}} \right)  {s'}^2\right]_{t=0}
- \frac{ S(\bar t  )}4.
\eeqsn
Otherwise, let us suppose $s'(\bar t )> 0 $ (the arguments will be similar if $s'(\bar t )< 0 $). Then, we define
\beqsn
t_0  =  \min \Big \{  t \in [ 0,  \bar t \, )\,  \Big |   \, 
 s'(\tau) > 0 \ \text{for}\   t < \tau \le \bar t  \Big \}.
\eeqsn
Clearly, at least one of the following must hold:
\beqsn
t_0= 0 \quad \text{or}\quad s'(t_0)=0.
\eeqsn
If $ t_0= 0$, having $ s'(t) > 0 $ in $ (  0, \bar t \, ]$,   we find
from \eqref{fiore}
\beqsn
E(\bar t )  \le   E(0) e^{c \int_0^{\bar t} |s'| d\tau}  =  E(0)  e^{c [s(\bar t  )-s(0)]}
\le E(0)  e^{c  \bar s}.
\eeqsn
where $c$ is constant introduced after \eqref{CS}.
\\
If $ s'(t_0) = 0$, we have
\beqsn
E(\bar t  )  \le  E(t_0) e^{c  \int_{t_0}^{\bar t} |s'|  d\tau } \le E(t_0)
 e^{c [s(\bar t  )-s(t_0)]}
\le E(t_0) e^{c\bar s} ,
\eeqsn
with
\beqsn
E(t_0) = E(0) - \frac{1}{4}  \left [\frac {d}{ds} \left ( \frac {1}{\sqrt {m}} \right)  {s'}^2\right]_{t=0}
  -\frac{S(t_0)} 4
\eeqsn
because of \eqref{4biss}.

In conclusion, in all cases $E(\bar t )$ is bounded by a constant  which depends only on the initial data and
the values of $ S(t)$ in $ [0,\bar t \, ]$.  
\end{proof}

\begin{Rem}
\begin{em}
Note that if $\frac{dm}{ds}(s)\geq0$ for all $s\in\R^+$ then formula \eqref{4biss} is enough to prove that $E(t)$ is bounded if and only if $S(t)$ is bounded, because of \eqref{2stelle}.

Moreover, as in Theorem~\ref{th}, if the initial data are sufficiently small we can avoid the assumption that $m$ is coercive at $\infty$.
Also, if $\left.u_t\right|_{t=0}\equiv0$, i.e. $\phi_1\equiv0$, then
\beqsn
M\left(\int|\nabla\phi_0|^2dx\right)<\int_0^{+\infty}m(h)dh
\eeqsn
and hence, arguing similarly as in \eqref{bound0}, we have that $s(t)$ is bounded and again coercivity at $\infty$ can be avoided.
\end{em}
\end{Rem}

\vspace*{3mm}
{\bf Acknowledgments.}
The first author is member of the Gruppo Nazionale per l'Analisi Ma\-te\-ma\-ti\-ca, la
Probabilit\`a e le loro Applicazioni (GNAMPA) of the Instituto Nazionale di Alta Matematica (INdAM).

\end{document}